\documentclass[letterpaper, 10 pt, conference]{ieeeconf}  

\IEEEoverridecommandlockouts                              

\usepackage{graphics} 
\usepackage{epsfig} 
\usepackage{mathptmx} 
\usepackage{times} 
\usepackage{amsmath} 
\usepackage{amssymb}  
\usepackage{algorithm}
\usepackage{algpseudocode} 
\usepackage{subcaption}

\newtheorem{thm}{Theorem}
\newtheorem{lem}{Lemma}
\newtheorem{prop}{Proposition}
\newtheorem{assum}{Assumption}

\newtheorem{remark}{Remark}

\title{\LARGE \bf
A Bayesian Perspective on the Data-Driven LQR
}

\author{Thierry Schwaller, Feiran Zhao, Florian D\"{o}rfler
\thanks{T. Schwaller, F. Zhao, and F. D\"{o}rfler are with the Department of Information Technology and Electrical Engineering, ETH Zürich, 8902 Zürich, Switzerland (e-mail: tschwaller@ethz.ch; zhaofe@control.ee.ethz.ch; dorfler@control.ee.ethz.ch)}%
}

\begin{document}

\maketitle
\thispagestyle{empty}
\pagestyle{empty}

\begin{abstract}
The data-driven linear quadratic regulator (ddLQR) is a widely studied control method for unknown dynamical systems with disturbance. Existing approaches, both indirect, i.e., those that identify a model followed by model-based design, and direct, which bypasses the identification step, often rely on the certainty-equivalence principle and therefore do not explicitly account for model uncertainty. In this paper, we propose a Bayesian formulation for both indirect and direct ddLQR that incorporates posterior uncertainty into the control design. The resulting expected cost decomposes into a certainty-equivalence term and a variance-dependent term, providing a principled interpretation of regularization. We further show that the indirect and direct formulations are equivalent under this perspective. The resulting direct method admits a tractable semidefinite program whose size is independent of the data length. Numerical simulations demonstrate improved optimality gap and closed-loop stability, particularly in low-data regimes.
\end{abstract}

\section{INTRODUCTION}
The linear quadratic regulator (LQR) is the benchmark for validating and comparing different data-driven control methods \cite{anderson2007optimal}, where control policies are learned from data without explicit model knowledge. Existing approaches to data-driven LQR can be broadly classified as indirect methods, where a dynamical model is identified followed by model-based control design, and direct methods, which bypass system identification. Both approaches have their own pros and cons; we refer to \cite{dorfler2023data} for a more detailed discussion.

Existing direct and indirect LQR approaches are predominantly based on the certainty-equivalence principle \cite{mania2019certainty, 8933093, dorfler2022bridging, dorfler2023certainty, tsiamis2023statistical, ivan21, bartos2025stability}. In particular, the indirect approach regards the dynamical model estimated from raw data as the ground truth and solves the corresponding LQR problem. Thus, the uncertainty of indirect LQR design stems from the modeling error induced by noisy data \cite{dorfler2023certainty}. The direct approach parameterizes the state-feedback gain as a linear function of a batch of persistently exciting data, leading to a data-based closed-loop parameterization by directly neglecting the noise \cite{8933093}. As such, the LQR problem can be reformulated as a data-based convex program without involving any explicit SysID. A covariance parameterization of the LQR problem \cite{zhao2025data} is further proposed for more economic computation in \cite{8933093} and to achieve direct adaptive control. The uncertainty of these direct approaches lies in the closed-loop parameterization and is neglected by following certainty equivalence. 

As a result, the uncertainty induced by noise is not explicitly accounted for in control design of both approaches, which usually leads to overconfident or even unstable controllers especially in low signal-to-noise ratio regimes. To compensate the uncertainty, both indirect and direct approaches incorporate regularization \cite{pillonetto2022regularized, dorfler2022bridging, zhao2025regularization}. In particular, the indirect approach imposes a regularizer for system identification to deal with ill-conditioned data and incorporate prior model knowledge from a Bayesian perspective \cite{pillonetto2022regularized}. The direct approach adds a projection-based or $2$-norm regularizer to the LQR cost to single-out a favorable solution and promote robustness~\cite{dorfler2023certainty}. However, the regularization coefficients need ad hoc tuning, and the relations among the regularization methods in direct and indirect approaches remain unclear.





Instead of following the certainty-equivalence principle, recent works \cite{chiuso2025harnessing, baggio2024bayesian} have taken a Bayesian perspective to the data-driven predictive control problem. Specifically, the objective is to find optimal inputs that minimizes the conditional expectation of the finite-horizon linear quadratic cost given past data, named final control error (FCE). Importantly, it was shown that the FCE can be decoupled into a certainty-equivalence cost plus an additional variance-dependent term, where the latter captures the uncertainty in the predictor and acts as a principled regularizer. Moreover, the regularizer coincides with those in previous literature of data-driven prediction control \cite{coulson2019data, dorfler2022bridging}, and the optimal regularization coefficients can be estimated without heuristic tuning. 

Motivated by \cite{chiuso2025harnessing, baggio2024bayesian}, we propose a Bayesian formulation of the data-driven LQR problem, where the objective is the conditional expectation of the infinite-horizon LQR cost given a batch of persistently exciting data and prior model knowledge. By using one-step predicted state approximation, we show that the expected cost can be decomposed into the certainty-equivalence cost plus a variance-dependent term as in \cite{chiuso2025harnessing, baggio2024bayesian}. This decomposition provides a principled interpretation of regularization in both direct and indirect data-driven LQR. 
Our contributions are summarized below.
\begin{itemize}
    \item We formulate the data-driven LQR problem from a Bayesian perspective and show how posterior uncertainty is propagated into the control design.
    \item We derive a variance-based regularization term from the posterior covariance of the model parameters.
    \item We obtain a new covariance-parametrized direct formulation that incorporates prior knowledge and can be cast as a tractable semidefinite programming problem.
\end{itemize} 

We demonstrate through numerical simulations that the Bayesian LQR solution improves the optimality gap and closed-loop robustness compared to certainty-equivalence and covariance-parametrized baselines \cite{zhao2025regularization}.

The remainder of this paper is organized as follows. Section II provides preliminaries on the LQR problem. Section III formulates the data-driven LQR problem from a Bayesian perspective. Section IV proposes the direct and indirect approaches for the Bayesian LQR. Section V shows simulation results. Conclusions are made in Section VI.

\textit{Notations.} We use $I_n$ to denote the $n$-by-$n$ identity matrix. 
    We use $\mathbb{E}[\cdot]$ to denote the expected value.
    We use $\|\cdot\|_F$ to denote the Frobenius norm defined by $\|A\|_F = \sqrt{\operatorname{Tr}\left(AA^\top\right)}$.
    We use $X\in \mathbb{R}^{p\times q} \sim \mathcal{MN}(\bar{X},\Sigma_c, \Sigma_r)$ to denote that $X$ is drawn from a matrix normal distribution, where $\mathbb{E}[\mathrm{vec}(X)] = \mathrm{vec}(\bar{X})$ and $\operatorname{Var}(\mathrm{vec}(X))=\Sigma_c \otimes \Sigma_r$.     
    We use $\bar{X}$ to denote the a priori estimate of the random variable $X$ and $\hat{X}$ to denote the a posteriori estimate of $X$.

\section{Preliminaries: the linear quadratic regulator problem and the data-driven setting}
This section provides the preliminaries on the linear quadratic regulator (LQR) and its formulation as an optimization problem, as well as the certainty-equivalence LQR with Bayesian estimate.

\subsection{The model-based LQR problem}
Consider a linear time-invariant system
\begin{equation*}
    x_{k+1} = Ax_k + Bu_k + w_k,
\end{equation*} where $A\in\mathbb{R}^{n\times n}$, $B\in\mathbb{R}^{n\times m}$, $x_k \in \mathbb{R}^n$, $u_k \in \mathbb{R}^m$, and $w_k \in \mathbb{R}^n$. The infinite horizon LQR aims to find a static state-feedback matrix $K\in \mathbb{R}^{m\times n}$, which minimizes the expected infinite horizon cost with an initial state $x_0$
\begin{equation}\label{eq:lqr_problem_true}
\begin{aligned}
    \min_K \quad & \limsup_{N\rightarrow \infty} \mathbb{E}\left[\frac{1}{N}\sum_{k=0}^{N-1}x_k^\top Q x_k + u_k^\top R u_k\right] \\
    \text{s.t.}\quad & x_{k+1}= Ax_k + Bu_k + w_k, \quad u_k = Kx_k,
\end{aligned}
\end{equation}
where $Q \in \mathbb{R}^{n\times n}\succeq 0$ and $R \in \mathbb{R}^{m\times m} \succ 0$ are penalty matrices. We make the following assumptions on the noise.

\begin{assum}\label{assum_noise}
    The noise sequence $\{w_k\}$ is identically and independently distributed with $w_k \sim \mathcal{N}(0,\sigma_w^2I_n)$, where $\sigma_w^2$ is assumed to be known.
\end{assum}


For all stabilizing $K$ and under \textit{Assumption \ref{assum_noise}}, problem \eqref{eq:lqr_problem_true} can be reformulated as \cite{anderson2007optimal}
\begin{subequations}\label{eq:lqr_opt_problem}
    \begin{align}
    \min_{K,\Sigma \succeq 0} \quad & \operatorname{Tr} \bigl((Q+K^\top R K)\Sigma\bigr) \\
    \text{s.t.}\quad & \Sigma = \sigma_w^2I_n + (A+BK)\Sigma (A+BK)^\top. \label{equ:sigma}
    \end{align}
\end{subequations}
This formulation expresses the LQR cost in terms of the steady-state covariance matrix $\Sigma$ of the closed-loop system, which is the unique positive definite solution to the Lyapunov equation \eqref{equ:sigma}.

When $(A, B)$ are known, the LQR problem can be solved via a Riccati equation. In the sequel, we introduce the certainty-equivalence LQR for the case where $(A, B)$ are unknown.

\subsection{Certainty-equivalence LQR with Bayesian model estimate}
We assume that the system matrices $A$ and $B$ are unknown, but we have access to an offline data set $\mathcal{D}$ and a Gaussian prior of $(A, B)$. 
The data set $\mathcal{D}=\{ X_0, X_1, U_0 \}$ consists of states $X_0$, inputs $U_0$, and successor states $X_1$,  
\begin{align*}
    X_0 &:= [x_0 \ x_1 \ \dots \ x_{T-1}] \in \mathbb{R}^{n \times T}, \\
    U_0 &:= [u_0 \ u_1 \ \dots \ u_{T-1}] \in \mathbb{R}^{m \times T}, \\
    X_1 &:= [x_1 \ x_2 \ \dots \ x_{T}] \in \mathbb{R}^{n \times T}.
\end{align*}
The data set is generated by applying an input sequence $u_k$ to the unknown open-loop system with i.i.d. noise. 
The data sets in $\mathcal{D}$ are connected by the following equation
\begin{equation}\label{eq:data_dynamics}
    X_1 = AX_0 + BU_0 + W_0
    = \begin{bmatrix}
        B & A
    \end{bmatrix}
    \underbrace{\begin{bmatrix}
        U_0 \\ X_0
    \end{bmatrix}}_{D_0}
    + W_0,
\end{equation}
where $W_0$ is the collection of the state noises $w_k$ defined as
\begin{equation*}
    W_0 := [w_0 \ w_1 \ \dots \ w_{T-1}] \in \mathbb{R}^{n \times T}.
\end{equation*}

\begin{assum}[persistency of excitation]\label{assum_pe}
    The input-state data $D_0$ is persistently exciting \cite{willems2005note}, i.e.,
\begin{equation*}
    \mathrm{rank}(D_0) = n+m.
\end{equation*}
\end{assum}

\begin{assum}[prior knowledge]\label{assum_gaussian_mat}
    The system matrices $(A,B)$ are drawn from a matrix normal distribution \cite{gupta2018matrix}
\begin{equation*}
   \begin{bmatrix}
        B & A
    \end{bmatrix}
    \sim
    \mathcal{MN}
    \left(
    \begin{bmatrix}
        \bar{B} & \bar{A}
    \end{bmatrix},
    I_{n},
    \Omega^{-1}
    \right),
\end{equation*}
where $\Omega$ is the precision matrix defined by
\begin{equation*}
    \Omega =
    \begin{bmatrix}
        \Omega_B & 0 \\ 0 & \Omega_A
    \end{bmatrix}
    \succeq 0.
\end{equation*} 
Note that the columns of $A$ and $B$ are uncorrelated, but the rows are correlated by $\Omega_B$ or $\Omega_A$ respectively.
\end{assum}

 Under the Gaussian prior of \textit{Assumption \ref{assum_gaussian_mat}} and the linear Gaussian dynamics \eqref{eq:data_dynamics}, the posterior distribution of the system matrices $p\left(\begin{bmatrix}B & A\end{bmatrix}\mid\mathcal{D}\right) $ is matrix normal.
 Consequently, the closed-loop matrix $A_\text{cl}= A+BK$
is itself a Gaussian random matrix for any fixed feedback gain $K$. This observation allows for a Bayesian interpretation of data-driven LQR. 

According to Appendix \ref{app:reg_least_squares}, the maximum a posteriori estimate of the system matrices is found by solving the following regularized least-squares problem 
\begin{equation}\label{eq:regularized_least_squares}
    \begin{aligned}
        \begin{bmatrix}
        \hat{B} & \hat{A}
        \end{bmatrix}
        = 
        \arg \min_{\begin{bmatrix}B & A\end{bmatrix}} \; &\frac{1}{\sigma_w^2}\|X_1 - \begin{bmatrix}
            B & A
        \end{bmatrix} D_0\|^2_F \\
        &+
        \left\|
        \begin{bmatrix}
        \Delta B & \Delta A
        \end{bmatrix} 
        \Omega^{1/2}
        \right\|^2_F,
    \end{aligned}
\end{equation} 
where $\begin{bmatrix}
        \Delta B &\Delta A
        \end{bmatrix} :=\begin{bmatrix}
        B & A
        \end{bmatrix} 
        -
        \begin{bmatrix}
        \bar{B} & \bar{A}
        \end{bmatrix}$.

\begin{lem}[posterior distribution]\label{lem:post_dist}
    The posterior distribution of $(A,B)$ given $\mathcal{D}$ is a matrix normal
    \begin{equation*}
        \begin{bmatrix}
        B & A
        \end{bmatrix} \mid \mathcal{D} \sim \mathcal{MN}\left(\begin{bmatrix}
        \hat{B} & \hat{A}
        \end{bmatrix},I_n,\Sigma_{B,A}\right),
    \end{equation*}
    where
    the mean is given by the maximum a posteriori estimate
    \begin{equation*}
        \begin{bmatrix}
        \hat{B} & \hat{A}
        \end{bmatrix}
        =
        \left(X_1D_0^\top + \sigma_w^2\begin{bmatrix}
        \bar{B} & \bar{A}
        \end{bmatrix}\Omega \right )/T \, \Psi^{-1}
    \end{equation*}
    and 
    $
        \Sigma_{B,A} = \sigma_w^2/T\Psi^{-1}
    $ is the covariance, where $\Psi$ is the regularized covariance matrix of $D_0$ defined by
    \begin{equation*}
        \Psi = (D_0D_0^\top + \sigma_w^2\Omega)/T \succ 0
    \end{equation*}
\end{lem}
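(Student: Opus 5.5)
We write $\Theta := \begin{bmatrix} B & A\end{bmatrix}$ and $\bar\Theta := \begin{bmatrix}\bar B & \bar A\end{bmatrix}$, and derive the posterior by Bayes' rule, completing the square in $\Theta$. First write the likelihood. By \eqref{eq:data_dynamics} and \textit{Assumption \ref{assum_noise}}, the columns of $W_0$ are i.i.d. $\mathcal{N}(0,\sigma_w^2 I_n)$. We treat $D_0$ as conditioning information: its columns depend only on past noise and inputs, so conditioning sequentially on $x_0,\dots,x_{k}$ gives a product of Gaussian transition densities. Hence, up to factors not depending on $\Theta$,
\begin{equation*}
p(\mathcal{D}\mid \Theta)\propto \exp\Bigl(-\tfrac{1}{2\sigma_w^2}\|X_1-\Theta D_0\|_F^2\Bigr).
\end{equation*}
By \textit{Assumption \ref{assum_gaussian_mat}} with column covariance $I_n$ and row covariance $\Omega^{-1}$, the prior density is $p(\Theta)\propto \exp\bigl(-\tfrac12\operatorname{Tr}((\Theta-\bar\Theta)\Omega(\Theta-\bar\Theta)^\top)\bigr)$. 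This is the regularizer in \eqref{eq:regularized_least_squares}.

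Next, multiply the two densities and expand the exponent as a quadratic in $\Theta$. Its quadratic part is $-\tfrac12\operatorname{Tr}\bigl(\Theta(\sigma_w^{-2}D_0D_0^\top+\Omega)\Theta^\top\bigr)$. Its linear part is $\operatorname{Tr}\bigl(\Theta(\sigma_w^{-2}D_0X_1^\top+\Omega\bar\Theta^\top)\bigr)$. Set $P:=\sigma_w^{-2}D_0D_0^\top+\Omega=\tfrac{T}{\sigma_w^2}\Psi$. Completing the square gives
\begin{equation*}
-\tfrac12\operatorname{Tr}\bigl((\Theta-\hat\Theta)P(\Theta-\hat\Theta)^\top\bigr)+\text{const}.
\end{equation*}
Here $\hat\Theta=(\sigma_w^{-2}X_1D_0^\top+\bar\Theta\Omega)P^{-1}=(X_1D_0^\top+\sigma_w^2\bar\Theta\Omega)/T\,\Psi^{-1}$. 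Reading off the matrix normal kernel yields column covariance $I_n$ and row covariance $P^{-1}=\tfrac{\sigma_w^2}{T}\Psi^{-1}=\Sigma_{B,A}$, as claimed. Because the posterior is Gaussian, its mean coincides with its mode. The mode is the minimizer of the negative log posterior, which is (twice) the objective in \eqref{eq:regularized_least_squares}. So $\hat\Theta$ is the MAP estimate. The first-order condition of \eqref{eq:regularized_least_squares} gives the same formula, which serves as a cross-check.

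It remains to justify $\Psi\succ 0$, which is needed for invertibility. By \textit{Assumption \ref{assum_pe}}, $D_0$ has full row rank $n+m$, so $D_0D_0^\top\succ 0$. Since $\Omega\succeq 0$, it follows that $\Psi\succ 0$.

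The main subtlety is the likelihood step. The regressors $X_0$ are not exogenous: they contain past noise. One must argue that the joint density of the trajectory factorizes into one-step conditionals, each with a $\Theta$-independent normalizing constant, and that the distribution of $u_k$ does not depend on $\Theta$. We assume open-loop excitation independent of $\Theta$ given the past. Under these conditions, $p(\mathcal D\mid\Theta)$ is proportional in $\Theta$ to the Gaussian expression above.
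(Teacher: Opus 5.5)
Your proposal is correct and follows essentially the same route as the paper's proof. Both use Bayes' rule with the Gaussian likelihood and matrix-normal prior, collect the quadratic and linear terms in $\Theta$, and complete the square to read off the posterior precision $\sigma_w^{-2}D_0D_0^\top+\Omega=(T/\sigma_w^2)\Psi$ and the mean. You also make explicit two points the paper leaves implicit: $\Psi\succ 0$ via persistency of excitation, and the one-step factorization that keeps the likelihood Gaussian in $\Theta$ even though $X_0$ contains past noise. The only slip is harmless: the negative log-posterior is half, not twice, the objective in \eqref{eq:regularized_least_squares}, which does not change the argmin.
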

 
The proof is provided in Appendix \ref{app:proof_lemma1} for self-completeness.
 


Following the certainty-equivalence principle, we substitute $(A, B)$ with its posterior expectation from \textit{Lemma \ref{lem:post_dist}} in the LQR problem \eqref{eq:lqr_opt_problem}, leading to the indirect formulation
\begin{equation}\label{eq:ce_bayesian_ddlqr}
\begin{aligned}
    \min_{K,\Sigma \succeq 0} \quad & \operatorname{Tr} \bigl((Q+K^\top R K)\Sigma\bigr) \\
    \text{s.t.}\quad & \Sigma = \sigma_w^2I_n + \begin{bmatrix}
        \hat{B} & \hat{A}
    \end{bmatrix}\begin{bmatrix}
        K \\ I_n
    \end{bmatrix}\Sigma \begin{bmatrix}
        K \\ I_n
    \end{bmatrix}^\top  \begin{bmatrix}
        \hat{B} & \hat{A}
    \end{bmatrix}^\top.
\end{aligned}
\end{equation}
This certainty-equivalence optimization problem ignores the posterior variance and might therefore lead to overconfident or unstable controllers with limited data. This problem will be solved by introducing a regularization term.

\section{A Bayesian approach to the data-driven LQR}
This section presents a Bayesian formulation of the data-driven LQR problem, which enables a systematic incorporation of uncertainty and leads to both indirect and direct Bayesian data-driven LQR formulations. 

\subsection{The Bayesian formulation of the LQR problem}
Consider the Bayesian setting, where we assume $(A,B)$ to be random matrices.
Given a batch of data $\mathcal{D}$, our goal is to find a stabilizing gain $K$ that minimizes the expectation of the LQR cost, where the expectation is taken with respect to the random noise $\{w_t\}$ and random $(A,B)$:
\begin{equation}\label{eq:ddlqr_true_problem}
\begin{aligned}
    \min_K \quad & \limsup_{N\rightarrow \infty} \mathbb{E}\left[\left.\frac{1}{N}\sum_{k=0}^{N-1}\|x_k\|^2_{Q+K^\top R K}\right | \mathcal{D}\right] \\
    \text{s.t.}\quad & x_{k+1}= A_\text{cl}x_k + w_k,
\end{aligned}
\end{equation}
where we denote the closed-loop matrix as $A_\text{cl}:= A+BK$.
To obtain a tractable formulation of \eqref{eq:ddlqr_true_problem}, we first define the nominal state $\bar{x}_k$, which follows the expected dynamics
\begin{equation}\label{equ:nominal}
    \bar{x}_{k+1} = \underbrace{\mathbb{E}\left [ A_\text{cl} \mid \mathcal{D} \right]}_{:=\hat{A}_\text{cl}}\bar{x}_k + w_k,
    \quad
    \bar{x}_0 = x_0.
\end{equation}
Denote $\Delta A_\text{cl} := A_\text{cl} - \hat{A}_\text{cl}$. Then, we define the deviated state as the difference between the state we aim to minimize and the nominal state $e_{k+1} := x_{k+1} - \bar{x}_{k+1}$, which by \eqref{equ:nominal} satisfies the following dynamics
\begin{equation*}
    e_{k+1} = A_\text{cl}e_k + \Delta A_\text{cl} \bar{x}_k, \quad e_0 = 0.
\end{equation*}
In particular, $e_{k+1}$ consists of two terms: the first term is the accumulated state deviation before time $k$, and the second is the one-step prediction error from time $k$. To achieve a tractable approximation of the posterior expected cost, we retain only the one-step predicted error 
\begin{equation}\label{eq:one_step_error_approx}
   e_{k+1} \approx \Delta A_\text{cl}  \bar{x}_k.
\end{equation}
As we will see later, with this approximation, we actually neglect the moments of the uncertainty higher than second-order ones in the expected cost function.


Then, the state square norm can be approximated by three terms, i.e.,
\begin{equation}
\begin{aligned}
    \|x_k\|^2 &= \|\bar{x}_k + e_k\|^2  = \|\bar{x}_k\|^2 + \|e_k\|^2 + 2\bar{x}_k^{\top}e_k \\
    & \overset{\eqref{eq:one_step_error_approx}}{\approx} \|\bar{x}_k\|^2 + \|\Delta A_\text{cl}  \bar{x}_{k-1}\|^2 + 2\bar{x}_k^{\top}\Delta A_\text{cl}  \bar{x}_{k-1}.
\end{aligned}
\end{equation}
In particular, the conditional expectation of the last term is zero, i.e.
\begin{equation*}
\begin{aligned}
      & \mathbb{E}_{\{w_t\},A,B}\left[\left.\bar{x}_k^\top \Delta A_\text{cl} \bar{x}_{k-1}\right|\mathcal{D}\right] \\
      &= \mathbb{E}_{\{w_t\}}\left [\left.\bar{x}_k^\top \mathbb{E}_{A,B}[\Delta A_\text{cl}|\mathcal{D}] \bar{x}_{k-1}\right|\mathcal{D}\right] = 0,
\end{aligned}
\end{equation*}
where the last equality follows from $\mathbb{E}_{A,B}[\Delta A_\text{cl}|\mathcal{D}]=0$.
Furthermore, the LQR problem in \eqref{eq:ddlqr_true_problem} becomes 
\begin{subequations}\label{eq:bayesian_main_approx_problem}
\begin{align}
    \min_K   & \lim_{N\rightarrow \infty} \mathbb{E}\left[\left.\frac{1}{N}\sum_{k=0}^{N-1}\left(\|\bar{x}_k\|^2_{Q+K^\top R K} + \|\Delta A_\text{cl}\bar{x}_k\|^2_{Q+K^\top R K}\right)\right | \mathcal{D}\right] \label{equ:sepa_cost} \\
    \text{s.t.} ~ & \bar{x}_{k+1}= \hat{A}_\text{cl}\bar{x}_k + w_k, ~\bar{x}_0=x_0
\label{eq:bayesian_main_approx_problem_2}
\end{align}
\end{subequations}
The expected posterior cost is therefore composed of a nominal cost and the prediction mismatch due to parameter uncertainty, following the nominal trajectory \eqref{eq:bayesian_main_approx_problem_2}.


\begin{remark}[separation principle]
    The separation of the expected posterior cost into the nominal certainty-equivalence cost plus the predicted state variance cost in \eqref{equ:sepa_cost} is closely related to the separation principle in data-driven predictive control \cite{chiuso2025harnessing}, where the uncertainty enters the cost also through the variance of the predicted outputs. 
\end{remark}

Next, we derive indirect and direct data-driven LQR (ddLQR) methods based on the Bayesian formulation of the LQR problem \eqref{eq:bayesian_main_approx_problem}.
\subsection{The indirect Bayesian LQR}
In the general case, the term added to the nominal system is given by
\begin{equation*}
    \frac{1}{N}\sum_{k=0}^{N-1}\mathbb{E}\left[\left.\|\Delta A_\text{cl}\bar{x}_k\|^2_{Q+K^\top R K}\right | \mathcal{D}\right].
\end{equation*}
To simplify the presentation and obtain a tractable regularization term, the penalty matrix of the state norm is chosen approximately as
\begin{equation}\label{eq:norm_approx}
    Q+K^\top R K \approx \frac{1}{n}\operatorname{Tr}\left(Q+K^\top R K\right)I_n =: \frac{\lambda_0}{n} I_n,
\end{equation}
where $\lambda_0$ is some constant to be tuned.

Since
\begin{equation}\label{eq:error_term_expectation}
\begin{aligned}
&\mathbb{E}_{\{w_t\},A,B}\left[\|\Delta A_\text{cl}\bar{x}_k\|^2\mid \mathcal{D}\right] \\
    &=
    \mathbb{E}_{\{w_t\}}\left[\left.\bar{x}_k^\top \mathbb{E}_{A,B}\left [\Delta A_\text{cl}^\top \Delta A_\text{cl}|\mathcal{D}\right] \bar{x}_k \right | \mathcal{D}\right],    
\end{aligned}
\end{equation}
and the posterior of $\begin{bmatrix}\Delta B & \Delta A\end{bmatrix}$ is matrix normal distributed $\begin{bmatrix}\Delta B & \Delta A\end{bmatrix}|\mathcal{D} \sim \mathcal{MN}\left(0_{n\times n},I_n,\Sigma_{B,A}\right)$, the inner expectation value can be written as
\begin{equation*}
    \mathbb{E}_{A,B}\left [\left.\Delta A_\text{cl}^\top \Delta A_\text{cl} \right|\mathcal{D}\right] =  n\cdot\begin{bmatrix}
        K^\top & I_n
    \end{bmatrix}\Sigma_{B, A}\begin{bmatrix}
        K \\ I_n
    \end{bmatrix},
\end{equation*}
where $\Sigma_{B,A}$ is the variance of the model estimate in Lemma \ref{lem:post_dist}.
Taking the expectation over $\{w_k\}$ of \eqref{eq:error_term_expectation} results in
\begin{equation}\label{eq:regularization_term_general}
     \mathbb{E}_{\{w_t\}}\left[\|\Delta A_\text{cl}\bar{x}_k\|^2\mid \mathcal{D}\right] 
    =
    n\operatorname{Tr}\left(\begin{bmatrix}
        K^\top & I_n
    \end{bmatrix}\Sigma_{B, A}\begin{bmatrix}
        K \\ I_n
    \end{bmatrix}\Sigma\right),
\end{equation}
where $\Sigma$ is the solution to the Lyapunov equation \eqref{equ:sigma}.
Then, the Bayesian LQR formulation \eqref{eq:ce_bayesian_ddlqr} becomes modified to
\begin{equation}\label{eq:ce_bayesian_ddlqr_regularized}
\begin{aligned}
    \min_{K,\Sigma \succeq 0} \quad &
    \operatorname{Tr} \bigl((Q+K^\top R K)\Sigma\bigr)
    + \lambda_0 \operatorname{Tr}\left(\begin{bmatrix}
        K^\top & I_n
    \end{bmatrix}\Sigma_{B, A}\begin{bmatrix}
        K \\ I_n
    \end{bmatrix}\Sigma\right) \\
    \text{s.t.}\quad &
    \Sigma = \sigma_w^2I_n + \hat{A}_\text{cl} \Sigma \hat{A}_\text{cl}^\top .
\end{aligned}
\end{equation}

By the definition of $\Sigma_{B,A}$ in \textit{Lemma \ref{lem:post_dist}}, the regularization term shown in \eqref{eq:regularization_term_general} can  be written as
\begin{equation}\label{equ:reg}
    \lambda\operatorname{Tr}
    \left (
    \begin{bmatrix}
        K^\top & I_n
    \end{bmatrix} 
    \Psi^{-1} 
    \begin{bmatrix}
        K \\ I_n
    \end{bmatrix} 
    \Sigma
    \right),
\end{equation}
where $\lambda := \sigma_w^2/T \lambda_0$ is a tunable hyperparameter, which should be chosen $\lambda \propto 1/T$.
\begin{thm}[indirect Bayesian data-driven LQR]
    Consider the stochastic system introduced in \eqref{eq:ddlqr_true_problem} under \textit{Assumptions \ref{assum_noise}-\ref{assum_gaussian_mat}} and the posterior distribution in \textit{Lemma \ref{lem:post_dist}}. Under the one-step approximation $e_{k+1}=\Delta A_\text{cl}\bar{x}_k$ and using the approximation in \eqref{eq:norm_approx}, the posterior expected infinite-horizon LQR cost admits the tractable formulation
\begin{equation}\label{eq:indirect_bayesian_ddlqr}
\begin{aligned}
    \min_{K,\Sigma \succeq 0} \quad &
    \operatorname{Tr} \bigl((Q+K^\top R K)\Sigma\bigr)
    + \lambda\operatorname{Tr}
    \left (
    \begin{bmatrix}
        K^\top & I_n
    \end{bmatrix} 
    \Psi^{-1} 
    \begin{bmatrix}
        K \\ I_n
    \end{bmatrix} 
    \Sigma
    \right) \\
    \text{s.t.}\quad &
    \Sigma = \sigma_w^2I_n + 
    \begin{bmatrix}
        \hat{B} & \hat{A}
    \end{bmatrix}\begin{bmatrix}
        K \\ I_n
    \end{bmatrix}\Sigma \begin{bmatrix}
        K \\ I_n
    \end{bmatrix}^\top\begin{bmatrix}
        \hat{B} & \hat{A}
    \end{bmatrix}^\top,
\end{aligned}
\end{equation}
where $\begin{bmatrix}
        \hat{B} & \hat{A}
    \end{bmatrix}$ 
    is given by \eqref{eq:regularized_least_squares} and $\lambda = \sigma_w^2\lambda_0/T$.
\end{thm}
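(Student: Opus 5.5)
The plan is to assemble the derivation already sketched in Section III into a chain of equalities and approximations. At each step I will separate what is exact from what uses the one-step approximation \eqref{eq:one_step_error_approx} and the norm approximation \eqref{eq:norm_approx}.

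\textbf{Nominal part.} First, decompose $x_k=\bar{x}_k+e_k$ with the nominal dynamics \eqref{equ:nominal}, and replace $e_k$ by $\Delta A_\text{cl}\bar{x}_{k-1}$. Expanding the weighted norm gives three terms. The cross term $2\bar{x}_k^\top P\Delta A_\text{cl}\bar{x}_{k-1}$, with $P=Q+K^\top RK$, has zero conditional mean. The justification is that $\Delta A_\text{cl}$ depends only on $(A,B)$, while $\bar{x}_k,\bar{x}_{k-1}$ depend only on $\{w_t\}$ and the fixed posterior mean $\hat A_\text{cl}$. These are independent given $\mathcal{D}$, and $\mathbb{E}[\Delta A_\text{cl}\mid\mathcal{D}]=0$ because $\hat A_\text{cl}=\hat A+\hat B K$ by linearity and Lemma~\ref{lem:post_dist}. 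This yields \eqref{eq:bayesian_main_approx_problem}. The index shift from $\bar{x}_{k-1}$ to $\bar{x}_k$ in the Ces\`aro average is immaterial as $N\to\infty$. The nominal term is handled by the standard argument behind \eqref{eq:lqr_opt_problem}: for $K$ stabilizing $\hat A_\text{cl}$, $\mathbb{E}[\bar{x}_k\bar{x}_k^\top]\to\Sigma$, where $\Sigma$ solves the Lyapunov equation with $\hat A_\text{cl}=[\hat B\ \hat A][K;I_n]$. The time-average of $\mathbb{E}\|\bar{x}_k\|_P^2$ therefore converges to $\operatorname{Tr}(P\Sigma)$.

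\textbf{Variance term.} Apply \eqref{eq:norm_approx} to reduce the weight to $\frac{\lambda_0}{n}I_n$. Conditioning on $\{w_t\}$, the inner expectation is $\mathbb{E}[\Delta A_\text{cl}^\top\Delta A_\text{cl}\mid\mathcal{D}]$. Write $\Delta A_\text{cl}=[\Delta B\ \Delta A]M$ with $M=[K;I_n]$. Then $\Delta A_\text{cl}\sim\mathcal{MN}(0,I_n,M^\top\Sigma_{B,A}M)$. For a matrix normal $Z\sim\mathcal{MN}(0,U,V)$ one has $\mathbb{E}[Z^\top Z]=\operatorname{Tr}(U)V$, which I verify entrywise from $\operatorname{Var}(\mathrm{vec}Z)=V\otimes U$ under the paper's convention. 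With $U=I_n$ this gives $nM^\top\Sigma_{B,A}M$. Taking the outer expectation and the time-average gives $\operatorname{Tr}(nM^\top\Sigma_{B,A}M\Sigma)$, and multiplying by $\lambda_0/n$ gives $\lambda_0\operatorname{Tr}(M^\top\Sigma_{B,A}M\Sigma)$. Substituting $\Sigma_{B,A}=\frac{\sigma_w^2}{T}\Psi^{-1}$ from Lemma~\ref{lem:post_dist} produces \eqref{equ:reg} with $\lambda=\sigma_w^2\lambda_0/T$. Finally, minimizing over $K$ with $\Sigma$ as an auxiliary variable pinned by the equality constraint gives \eqref{eq:indirect_bayesian_ddlqr}. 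The identification of $[\hat B\ \hat A]$ with the minimizer of \eqref{eq:regularized_least_squares} is inherited from Lemma~\ref{lem:post_dist}.

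\textbf{Main obstacle.} The delicate step is the limit interchange: $\limsup$ becomes $\lim$, and the Ces\`aro mean of second moments converges to the stationary covariance. This requires $K$ to stabilize $\hat A_\text{cl}$. I will therefore restrict the feasible set to such $K$, as the paper does for \eqref{eq:lqr_opt_problem}. On this set the Lyapunov solution is unique and positive definite, and $\mathbb{E}[\bar{x}_k\bar{x}_k^\top]-\Sigma=\hat A_\text{cl}^k(x_0x_0^\top-\Sigma)\hat A_\text{cl}^{k\top}\to0$ geometrically, so both averaged terms converge. Stability of $\hat A_\text{cl}$ does not guarantee stability of the true $A_\text{cl}$. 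I will not claim equality with the original cost \eqref{eq:ddlqr_true_problem}. The theorem asserts only that the expected cost equals the stated program under the two approximations, so the statement is about the approximated cost, and I will state this explicitly.
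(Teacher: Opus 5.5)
Your proposal is correct and follows the same route as the paper, whose one-line proof simply invokes the Section~III derivation (the split $x_k=\bar{x}_k+e_k$, the one-step approximation, the isotropic weight \eqref{eq:norm_approx}, and $\Sigma_{B,A}=\frac{\sigma_w^2}{T}\Psi^{-1}$ from Lemma~\ref{lem:post_dist}); your Ces\`aro-limit argument, the restriction to $K$ stabilizing $\hat{A}_\text{cl}$, and the caveat that only the approximated cost is characterized make explicit what the paper leaves implicit. The one issue is notation: the paper writes $\operatorname{Var}(\mathrm{vec}(X))=\Sigma_c\otimes\Sigma_r$, which matches your $V\otimes U$ only if its $\mathrm{vec}$ stacks rows, so state explicitly that you mean independent rows of $[\Delta B\ \Delta A]$, each with covariance $\Sigma_{B,A}$, since that is what yields $\mathbb{E}[\Delta A_\text{cl}^\top\Delta A_\text{cl}\mid\mathcal{D}]=nM^\top\Sigma_{B,A}M$.
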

\begin{proof}
    The result follows from the decomposition $x_k=\bar{x}_k+e_k$, the one-step approximation, the approximation in \eqref{eq:norm_approx}, and the posterior covariance expression in \textit{Lemma \ref{lem:post_dist}}.
\end{proof}

\begin{remark}[exploitation effect of the regularization]
The regularized cost function of \eqref{eq:ce_bayesian_ddlqr_regularized} can be written as
\begin{equation*}
    \operatorname{Tr} \left(
    \begin{bmatrix}
        K \\ I_n
    \end{bmatrix}^\top\left(
    \begin{bmatrix}
        R & 0 \\ 0 & Q
    \end{bmatrix}
    + \lambda 
    \Psi^{-1} 
    \right)\begin{bmatrix}
        K \\ I_n
    \end{bmatrix}\Sigma\right).
\end{equation*} 
The regularization modifies the quadratic cost matrices by adding a data-dependent term proportional to the posterior covariance of the system parameters. The eigenvectors of $\Psi^{-1}$ corresponding to large eigenvalues correspond to directions in the parameter space with high posterior uncertainty. Therefore, the controller tends to take safe actions along well-explored parameter directions, which is referred to as exploitation in the context of reinforcement learning. This is closely related to the exploitation and exploration effect of a similar regularization method \cite{zhao2025regularization}.
\end{remark}
\subsection{The direct Bayesian LQR}
We can now cast the indirect data-driven LQR presented in \eqref{eq:indirect_bayesian_ddlqr} into a direct ddLQR, which bypasses the system identification. 
Under \textit{Assumptions \ref{assum_pe}} and  \ref{assum_gaussian_mat}, the regularized empirical covariance matrix $\Psi$ from \textit{Lemma \ref{lem:post_dist}} is positive definite, and there is a unique solution $V\in\mathbb{R}^{(n+m)\times n}$ to 
\begin{equation}\label{eq:covariance_parameterization}
    \begin{bmatrix}
        K \\ I_n
    \end{bmatrix} = \Psi V 
     =: \begin{bmatrix}
        \Psi_1 \\ \Psi_2
    \end{bmatrix}V
\end{equation}
for any given $K$.
Furthermore, we define the first $m$ rows of $\Psi$ as $\Psi_1 \in \mathbb{R}^{m\times (n+m)}$ and the other $n$ rows as $\Psi_2 \in \mathbb{R}^{n\times (n+m)}$.

With this parameterization we can reformulate the posterior distribution of the closed-loop $A_\text{cl}$ given $\mathcal{D}$ according to \textit{Lemma \ref{lem:post_dist}} as
\begin{equation*}
    A_\text{cl} | \mathcal{D} \sim \mathcal{MN}\left(\begin{bmatrix}
        \hat{B} & \hat{A}
    \end{bmatrix}\Psi V,I_n,\sigma_w^2/T V^\top \Psi V\right),
\end{equation*}
where the expected value of the closed loop system given $\mathcal{D}$ is 
\begin{equation*}
    \begin{bmatrix}
        \hat{B} & \hat{A}
    \end{bmatrix}\Psi V
    = \left(X_1D_0^\top + \sigma_w^2\begin{bmatrix}
        \bar{B} & \bar{A}
    \end{bmatrix}\Omega\right ) V/T =:\bar{X}_1V.
\end{equation*}
By setting $K=\Psi_1 V$ and enforcing $I_n = \Psi_2 V$, we can enforce \eqref{eq:covariance_parameterization} in the optimization problem \eqref{eq:indirect_bayesian_ddlqr}, which leads to
\begin{equation}\label{eq:direct_bayesian_ddlqr}
\begin{aligned}
    \min_{K,\Sigma \succeq 0} \quad &
    \operatorname{Tr} \bigl((Q+V^\top \Psi_1^\top R \Psi_1 V)\Sigma\bigr)
    + \lambda\operatorname{Tr}(\Psi V \Sigma V^\top) \\
    \text{s.t.}\quad &
    \Sigma = \sigma_w^2I_n + \bar{X}_1V \Sigma V^\top \bar{X}_1^\top,~~I_n = \Psi_2 V.
\end{aligned}
\end{equation}
The next result shows that the proposed indirect and direct formulations are equivalent, so the direct method preserves the same control objective while avoiding explicit model identification.
\begin{thm}
The direct data-driven approach presented in \eqref{eq:direct_bayesian_ddlqr} and the indirect data-driven approach presented in \eqref{eq:indirect_bayesian_ddlqr} are equivalent in the sense that their solutions coincide.
\end{thm}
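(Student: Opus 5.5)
The plan is to show that the change of variables \eqref{eq:covariance_parameterization} is a bijection between the feasible sets of \eqref{eq:indirect_bayesian_ddlqr} and \eqref{eq:direct_bayesian_ddlqr}, and that it leaves both the objective and the Lyapunov constraint unchanged. Equal optimal values and corresponding minimizers then follow.

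\textbf{Step 1 (bijection).} By Assumptions \ref{assum_pe} and \ref{assum_gaussian_mat}, $\Psi = (D_0D_0^\top+\sigma_w^2\Omega)/T \succ 0$ is invertible.
\begin{itemize}
\item Given any $K$, the matrix $V = \Psi^{-1}\begin{bmatrix} K \\ I_n\end{bmatrix}$ is the unique solution of \eqref{eq:covariance_parameterization}. It satisfies $\Psi_2 V = I_n$ and $\Psi_1 V = K$.
\item Conversely, any $V$ with $\Psi_2 V = I_n$ defines $K := \Psi_1 V$, and then $\Psi V = \begin{bmatrix} K \\ I_n\end{bmatrix}$.
\end{itemize}
So the maps $K\mapsto V$ and $V\mapsto \Psi_1 V$ are mutually inverse between $\mathbb{R}^{m\times n}$ and the affine set $\{V : \Psi_2 V = I_n\}$. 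The matrix $\Sigma$ is carried along unchanged. I read the decision variable in \eqref{eq:direct_bayesian_ddlqr} as $(V,\Sigma)$, with $K$ recovered as $\Psi_1 V$.

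\textbf{Step 2 (objective).} Substitute $\begin{bmatrix} K \\ I_n\end{bmatrix}=\Psi V$.
\begin{itemize}
\item The nominal term: $K^\top R K = V^\top\Psi_1^\top R\Psi_1 V$, so it matches directly.
\item The regularizer: using $\Psi^\top=\Psi$,
\begin{equation*}
\begin{bmatrix} K^\top & I_n\end{bmatrix}\Psi^{-1}\begin{bmatrix} K \\ I_n\end{bmatrix} = V^\top\Psi\Psi^{-1}\Psi V = V^\top \Psi V .
\end{equation*}
By cyclicity of the trace, $\operatorname{Tr}(V^\top\Psi V\Sigma)=\operatorname{Tr}(\Psi V\Sigma V^\top)$.
\end{itemize}
Hence the two objectives agree pointwise on corresponding feasible pairs.

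\textbf{Step 3 (constraint).} We need $\begin{bmatrix}\hat B & \hat A\end{bmatrix}\Psi = \bar X_1$. Lemma \ref{lem:post_dist} gives $\begin{bmatrix}\hat B & \hat A\end{bmatrix} = \bigl(X_1D_0^\top+\sigma_w^2\begin{bmatrix}\bar B&\bar A\end{bmatrix}\Omega\bigr)\Psi^{-1}/T$. Multiplying on the right by $\Psi$ yields exactly $\bar X_1$ as defined before \eqref{eq:direct_bayesian_ddlqr}. Therefore
\begin{equation*}
\begin{bmatrix}\hat B & \hat A\end{bmatrix}\begin{bmatrix} K \\ I_n\end{bmatrix} = \bar X_1 V ,
\end{equation*}
and the Lyapunov constraints of the two problems coincide. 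With them the set of admissible $\Sigma\succeq 0$ coincides as well, and so does the implicit stability requirement, since both problems describe the same closed-loop matrix $\hat A_\text{cl}$.

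Combining Steps 1--3: every feasible $(K,\Sigma)$ of \eqref{eq:indirect_bayesian_ddlqr} maps to a feasible $(V,\Sigma)$ of \eqref{eq:direct_bayesian_ddlqr} with the same cost, and vice versa. Thus the optimal values are equal, and optimizers correspond via $K^\star=\Psi_1V^\star$.

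The main obstacle is bookkeeping rather than a genuine difficulty. It lies in Step 3, where one must check that the $1/T$ normalizations in $\Psi$, in $\begin{bmatrix}\hat B&\hat A\end{bmatrix}$ and in $\bar X_1$ are consistent, so that $\begin{bmatrix}\hat B&\hat A\end{bmatrix}\Psi=\bar X_1$ holds exactly. A second point is to state clearly that $\Psi$ is symmetric positive definite, which is used both for invertibility and for the simplification of the regularizer.
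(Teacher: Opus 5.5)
Your proof is correct and follows the same route as the paper: the covariance parameterization $V=\Psi^{-1}\begin{bmatrix} K \\ I_n\end{bmatrix}$ (unique since $\Psi\succ 0$) transforms the objective and Lyapunov constraint of \eqref{eq:direct_bayesian_ddlqr} into those of \eqref{eq:indirect_bayesian_ddlqr}, and the closed-loop mean is identified with the regularized least-squares estimate via Lemma~\ref{lem:post_dist}. Your version is more explicit than the paper's, since it spells out the inverse map $V\mapsto\Psi_1 V$ on $\{V:\Psi_2V=I_n\}$, the identity $\begin{bmatrix}\hat B & \hat A\end{bmatrix}\Psi=\bar X_1$, and the reduction of the regularizer to $V^\top\Psi V$, all of which the paper's proof leaves implicit.
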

\begin{proof}
Since $\Psi$ is positive definite, the variable $V$ solving \eqref{eq:covariance_parameterization} is uniquely determined as $V=\Psi^{-1}\begin{bmatrix}
    K^\top & I_n
\end{bmatrix}^\top$ we can reformulate \eqref{eq:direct_bayesian_ddlqr} as
\begin{equation*}
\begin{aligned}
    \min_{K,\Sigma \succeq 0} \quad &
    \operatorname{Tr} \bigl((Q+K^\top R K)\Sigma\bigr) + \lambda\operatorname{Tr}\left(\begin{bmatrix}
        K \\ I_n
    \end{bmatrix}^\top\Psi^{-1} \begin{bmatrix}
        K \\ I_n
    \end{bmatrix} \Sigma \right) \\
    \text{s.t.}\quad &
    \Sigma = \sigma_w^2I_n + \hat{A}_\text{cl}\Sigma \hat{A}_\text{cl}^\top.
\end{aligned}
\end{equation*}
Furthermore, we know that $\hat{A}_\text{cl} = \mathbb{E}\!\left[ A_\text{cl} \,\middle|\, \mathcal{D}\right]$ which, according to \textit{Lemma \ref{lem:post_dist}}  coincides with the definition of the solution to the regularized least-squares problem \eqref{eq:regularized_least_squares}.
Therefore, we conclude that the two formulations \eqref{eq:indirect_bayesian_ddlqr} and \eqref{eq:direct_bayesian_ddlqr} are equivalent.
\end{proof}

Next, we show that \eqref{eq:direct_bayesian_ddlqr} admits a semi-definite program (SDP) formulation, where the size of the optimization variables is independent of the data size $T$.  
\begin{prop}\label{prop_sdp}
    If \eqref{eq:direct_bayesian_ddlqr} is feasible, then the optimal gain for \eqref{eq:direct_bayesian_ddlqr} can be computed by $K = \Psi_1 S \Sigma^{-1}$, where $S$ and $\Sigma$ are given by
    \begin{equation}\label{eq:sdp_direct_bayesian_ddlqr}
    \begin{aligned}
        \min_{\Sigma,S,L,M}\quad & \operatorname{Tr} (Q \Sigma) + \operatorname{Tr} (R L) + \lambda\operatorname{Tr}(M\Psi) \\
        \text{s.t.}\quad & \Psi_2S = \Sigma, 
        \quad \begin{bmatrix}
            \Sigma - \sigma_w^2I_n & \bar{X}_1 S \\
            S^\top \bar{X}_1^\top,\quad\Sigma
        \end{bmatrix} \succeq 0, \\
        & \begin{bmatrix}
            L & \Psi_1 S \\
            S^\top \Psi_1^\top & \Sigma
        \end{bmatrix} \succeq 0,  
         \begin{bmatrix}
            M & S \\
            S^\top & \Sigma
        \end{bmatrix} \succeq 0.
    \end{aligned}
    \end{equation}
\end{prop}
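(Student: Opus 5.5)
The plan is the standard change of variables $S := V\Sigma$, together with Schur-complement relaxations of the nonlinear terms. We then argue that the relaxation is tight at the optimum. Throughout we use that $\Sigma \succ 0$ at any feasible point of \eqref{eq:direct_bayesian_ddlqr}: the Lyapunov constraint gives $\Sigma \succeq \sigma_w^2 I_n$. Hence $V = S\Sigma^{-1}$ is recovered uniquely from $S$.

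\textbf{Rewriting the problem in $(\Sigma,S)$.} Substituting $V=S\Sigma^{-1}$ turns each piece of \eqref{eq:direct_bayesian_ddlqr} into a function of $(\Sigma,S)$:
\begin{itemize}
\item the constraint $I_n=\Psi_2 V$ becomes the linear constraint $\Psi_2 S=\Sigma$;
\item the Lyapunov equation becomes $\Sigma-\sigma_w^2 I_n = \bar X_1 S\Sigma^{-1}S^\top \bar X_1^\top$;
\item the input cost becomes $\operatorname{Tr}(V^\top\Psi_1^\top R\Psi_1 V\Sigma)=\operatorname{Tr}(R\,\Psi_1 S\Sigma^{-1}S^\top\Psi_1^\top)$ by cyclicity;
\item the regularizer becomes $\operatorname{Tr}(\Psi V\Sigma V^\top)=\operatorname{Tr}(\Psi\, S\Sigma^{-1}S^\top)$.
\end{itemize}
Next we introduce epigraph variables $L\succeq \Psi_1 S\Sigma^{-1}S^\top\Psi_1^\top$ and $M\succeq S\Sigma^{-1}S^\top$. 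Since $\Sigma\succ0$, the Schur complement lemma turns each of these into exactly the corresponding LMI in \eqref{eq:sdp_direct_bayesian_ddlqr}. Because $R\succ0$ and $\Psi\succ0$, the terms $\operatorname{Tr}(RL)$ and $\operatorname{Tr}(M\Psi)$ are monotone in the Loewner order. So at an optimum we may take $L$ and $M$ equal to their lower bounds without loss, and the epigraph reformulation is exact.

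\textbf{Relaxing the equality to an inequality (main obstacle).} The SDP replaces the Lyapunov equality by the inequality $\Sigma\succeq \sigma_w^2I_n+\bar X_1 S\Sigma^{-1}S^\top\bar X_1^\top$, which is the first LMI by a Schur complement. We must show this relaxation does not change the optimal gain.

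Take any feasible SDP point and set $V=S\Sigma^{-1}$ and $K=\Psi_1 V$. The inequality says $\Sigma \succeq \sigma_w^2 I + \hat A_{\mathrm{cl}}\Sigma\hat A_{\mathrm{cl}}^\top$ with $\Sigma\succ0$. This is a Lyapunov inequality, so $\hat A_{\mathrm{cl}}$ is Schur stable. By the comparison principle, $\Sigma\succeq\Sigma_K$, where $\Sigma_K$ is the exact solution of the Lyapunov equation for this $K$.

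Now write the objective at fixed $V$ as $\operatorname{Tr}(P_V\Sigma)$, where
\begin{equation*}
P_V = Q+V^\top\Psi_1^\top R\Psi_1V+\lambda V^\top\Psi V \succeq 0 .
\end{equation*}
This expression is monotone in $\Sigma$. Hence replacing $(\Sigma,S)$ by $(\Sigma_K,V\Sigma_K)$ is still feasible and does not increase the cost. Feasibility holds because $\Psi_2V\Sigma_K=\Sigma_K$ follows from $\Psi_2V=I_n$, which itself follows from $\Psi_2S=\Sigma$.

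The step to check carefully is exactly this one: the objective must be re-expressed in terms of the fixed $V$, not the fixed $S$, so that the monotonicity argument applies.

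\textbf{Conclusion.} Any SDP solution therefore maps to a point feasible for \eqref{eq:direct_bayesian_ddlqr} with equal or lower cost, and conversely every feasible point of \eqref{eq:direct_bayesian_ddlqr} gives an SDP-feasible point with the same cost via $S=V\Sigma$. The optimal values coincide. An optimal SDP solution yields the optimal $V=S\Sigma^{-1}$, hence the optimal gain $K=\Psi_1 V=\Psi_1 S\Sigma^{-1}$. Feasibility of \eqref{eq:direct_bayesian_ddlqr} guarantees that the SDP is feasible, so the claim is not vacuous.
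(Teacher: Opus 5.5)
Your proposal is correct and follows the same route as the paper's proof: the change of variables $V=S\Sigma^{-1}$, followed by Schur complements on the Lyapunov, input-cost, and regularization terms. The paper writes the Lyapunov equality and the epigraph bounds on $L$ and $M$ as inequalities without comment. Your extra argument supplies the tightness step the paper leaves implicit: Schur stability of $\hat{A}_\text{cl}$, the comparison $\Sigma\succeq\Sigma_K$, and monotonicity of $\operatorname{Tr}(P_V\Sigma)$ at fixed $V$, which uses $\lambda\ge 0$.
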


\begin{proof}
We can reformulate \eqref{eq:direct_bayesian_ddlqr} by using the change of variables $V=S\Sigma^{-1}$ as
\begin{equation*}
\begin{aligned}
    \min_{L,M,S,\Sigma \succeq 0} \quad &
    \operatorname{Tr} (Q\Sigma)+\operatorname{Tr}(LR) + \lambda\operatorname{Tr}( M \Psi) \\
    \text{s.t.}\quad &
    \Psi_2 S = \Sigma, \\
    & \Sigma \succeq \sigma_w^2I_n\\
    &\Sigma \succeq \bar{X}_1 S\Sigma^{-1}S^\top\bar{X}_1^\top + \sigma_w^2I_n, \\
    & L \succeq \Psi_1 S \Sigma^{-1} S^\top \Psi_1^\top, \\
    & M \succeq S\Sigma^{-1}S^\top.
\end{aligned}
\end{equation*}
Using the Schur complement for the last four inequalities in the constraints, the proof is completed.
\end{proof}

The dimensions of the matrices in \eqref{eq:sdp_direct_bayesian_ddlqr} do not depend on the data length and can therefore be efficiently solved by modern SDP solvers (e.g., \cite{goulart2024clarabel}). Therefore, by following Algorithm \ref{alg:direct_bayes_short} we can find the state-feedback gain $K$.
\begin{remark}[non-informative prior]
    The covariance-parametrization shown in \eqref{eq:covariance_parameterization} was used similarly in \cite{zhao2025regularization}, however instead of the regularized covariance matrix $\Psi$ the covariance matrix $\Phi$ was used
    \begin{equation*}
        \Phi = D_0D_0^\top/T. 
    \end{equation*}
    In the case of the non-informative prior, meaning $\Omega$ is equal to the zero matrix, the covariance parametrization in \cite{zhao2025regularization}  and the direct Bayesian LQR coincide. 
\end{remark}
\begin{algorithm}[t]
\caption{Direct Bayesian LQR}
\label{alg:direct_bayes_short}
\begin{algorithmic}[1]
\Require Data $\mathcal{D}=\{X_0,X_1,U_0\}$, weights $(Q,R)$, prior $\left(\begin{bmatrix}\hat{B} & \hat{A}\end{bmatrix},\Omega\right)$, $\sigma_w^2$
\Ensure State-feedback gain $K$

\State Form $D_0 = \begin{bmatrix} U_0 \\ X_0 \end{bmatrix}$
\State Compute regularized data covariance $\Psi = \frac{1}{T}(D_0D_0^\top+\sigma_w^2\Omega)$
\State Solve the SDP in \textit{Proposition \ref{prop_sdp}} to obtain $(\Sigma,S)$
\State Recover controller $K = \Psi_1 S \Sigma^{-1}$
\State \Return $K$
\end{algorithmic}
\end{algorithm}

\section{Simulation results}
In this section, simulation results for the Bayesian direct data-driven LQR will be shown using a discrete-time second order spring-mass-damper system as an example.
We consider a discrete-time second-order spring-mass damper system with state
$x_k = [\,p_k \;\; v_k\,]^\top$ (position and velocity)
\begin{equation*}
x_{k+1} = A x_k + B u_k + w_k,
\qquad
w_k \sim \mathcal N(0,\sigma_w^2 I_2),
\end{equation*}
where the system matrices are 
\begin{equation*}
A =
\begin{bmatrix}
1 & T_s\\
-\alpha & 1-\beta
\end{bmatrix},
\qquad
B =
\begin{bmatrix}
0\\
\gamma
\end{bmatrix}.
\end{equation*}
For a small sampling time $T_s$, the coefficients admit the physical
interpretation $\alpha \approx (k/m)T_s$, 
$\beta \approx (c/m)T_s$, and 
$\gamma \approx (1/m)T_s$. We set $T_s = 1$.
However, in this work we treat $(\alpha,\beta,\gamma)$ directly as
discrete-time parameters.

We place independent Gaussian priors on the parameters
\begin{equation*}
\alpha \sim \mathcal N(\bar{\alpha},\sigma_\alpha^2),
\qquad
\beta \sim \mathcal N(\bar{\beta},\sigma_\beta^2),
\qquad
\gamma \sim \mathcal N(\bar{\gamma},\sigma_\gamma^2).
\end{equation*}
This induces a matrix-normal prior on
$(A,B)$ of the form
\begin{equation*}
\begin{bmatrix}B & A\end{bmatrix} \sim \mathcal{MN}(\begin{bmatrix}\bar{B} & \bar{A}\end{bmatrix}, I_2, \Omega_0^{-1}),
\end{equation*}
with prior mean
\begin{equation*}
\begin{bmatrix}\bar{B} & \bar{A}\end{bmatrix} =
\begin{bmatrix}
0 & 1 & T_s\\
\bar{\gamma} & -\bar{\alpha} & 1-\bar{\beta}
\end{bmatrix},
\end{equation*}
and row-wise precision matrix
\begin{equation*}
\Omega_0 =
\mathrm{diag}\!\left(
\sigma_\gamma^{-2},
\sigma_\alpha^{-2},
\sigma_\beta^{-2}
\right).
\end{equation*}
In each run $\alpha$, $\beta$, and $\gamma$ were randomly sampled. To keep the data collection step marginally stable, the spectral radius of $A$ was restricted to be at most $1.05$. 

We will focus on two metrics, namely
\paragraph{Empirical optimality gap}
The empirical optimality gap is defined as 
\begin{equation*}
    \mathcal{E} = \frac{C(K)-C^*}{C^*}.
\end{equation*}
It compares the cost accumulated by applying the controller found compared to the theoretical optimal cost $C^*$. 
This optimality gap is calculated in each run, and the median value of all these values is considered if the found $K$ stabilizes the system. 

\paragraph{Stability rate}
The stability rate is defined as the percentage of runs in which a stabilizing controller was determined by applying the found controller $K$.

The data set $\mathcal{D}$ is collected by randomly sampling $U_0$, $W_0$, and $x_0$ from Gaussian distributions and applying them to the open-loop system to obtain $X_0$ and $X_1$. For each data point, 10,000 runs were made.

Two types of simulations were considered; one explores the effect that regularization $\lambda$ has on the above-mentioned metrics and the other explores the effects of collecting more data $T$ has on the metrics. In both simulations, we compare the covariance-parametrized approach with the proposed approach.

The parameters in the simulation are shown in
Table $\ref{table_sim_params}$.

\begin{table}[h]
\caption{Simulation Parameters}
\label{table_sim_params}
\begin{center}
\begin{tabular}{|c|c|c|c|c|c|c|c|c|}
\hline
  $\bar{\alpha}$ & $\bar{\beta}$ & $\bar{\gamma}$ & $\sigma_{\alpha}$ & $\sigma_{\beta}$ & $\sigma_{\gamma}$& $\sigma_w$ &Q & R \\
\hline
 $1.05$ & $0.05$ & $1$ & $0.5$ & $0.1$ & $0.8$ & $0.25$ & $\operatorname{diag}(5,0.1)$ & $0.1$ \\
\hline
\end{tabular}
\end{center}
\end{table}

\subsubsection{Effects of the regularization}
To assess the effects of regularization, we ran the simulation for different values of $\lambda$, while the data length was chosen to be $T=8$. 
The results of the simulations are displayed in Figure \ref{fig:discrete_spring_mass_results}. 

\begin{figure}
  \centering
  \subfloat[a][Stability rate of the Bayesian LQR and of the covariance-parametrized LQR for different values of $\lambda$]{\includegraphics[width=0.75\linewidth]{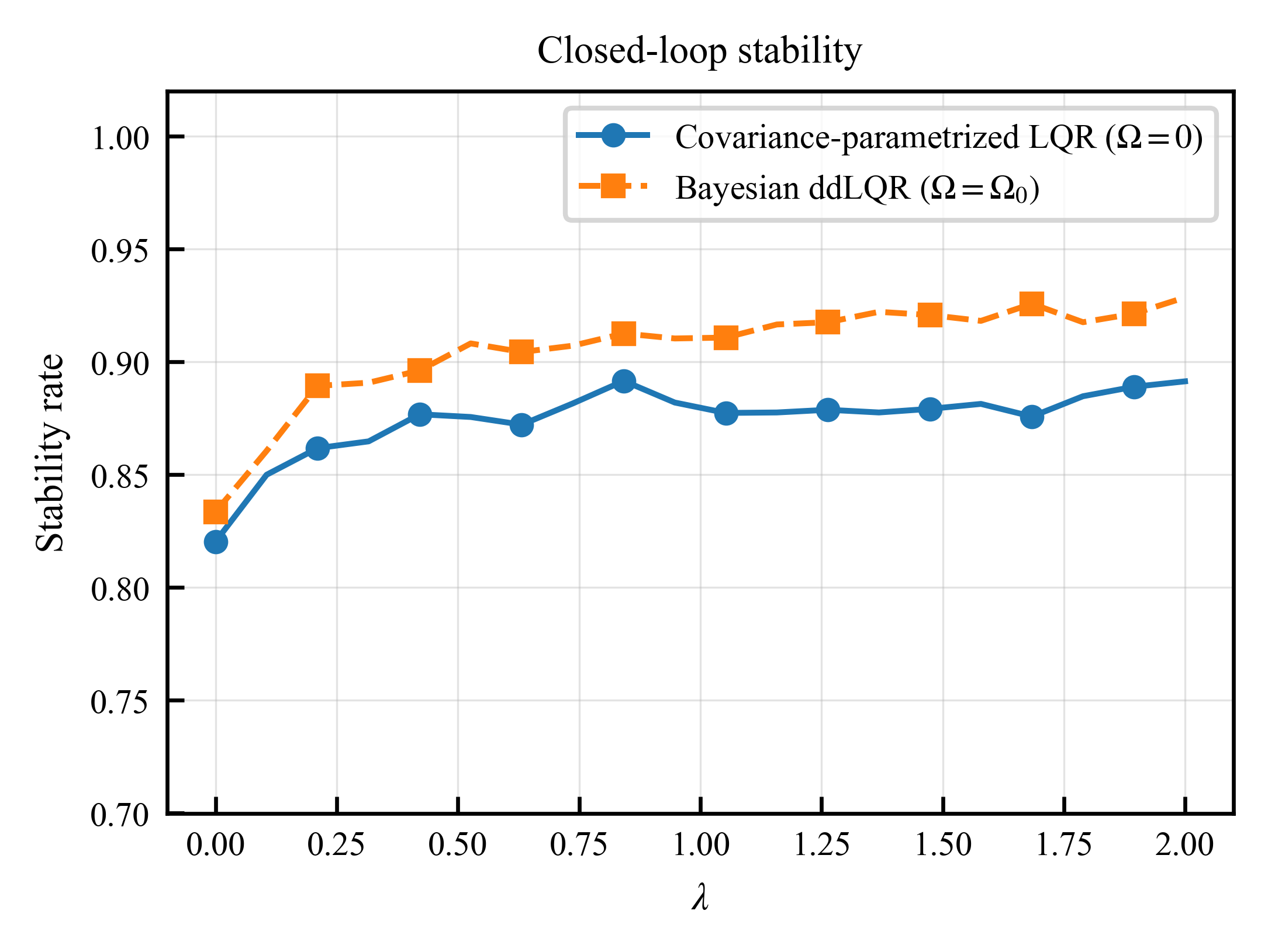} \label{fig:a}} \\
  \subfloat[b][Median optimality gap of the Bayesian LQR and of the covariance-parametrized LQR for different values of $\lambda$]{\includegraphics[width=0.75\linewidth]{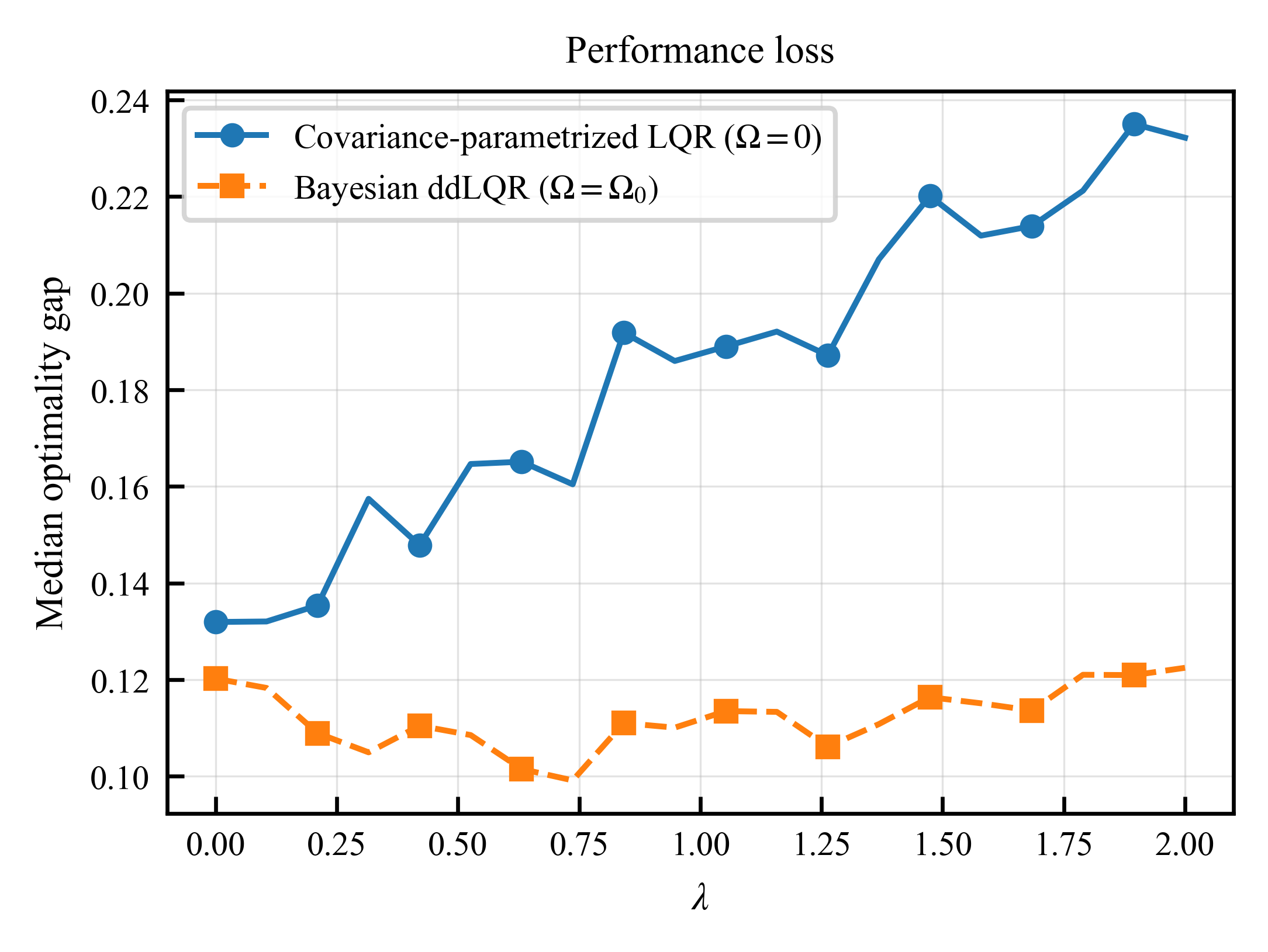} \label{fig:b}}
  \caption{Effects of the regularization for the covariance-parametrized LQR and for the Bayesian ddLQR.} \label{fig:discrete_spring_mass_results}
\end{figure}

As $\lambda$ increases, the stability rate for both approaches improves to a point. However, as seen especially for the covariance-based approach, choosing $\lambda$ too large can result in worse performance. These results support the interpretation of the posterior covariance term as a robustness-promoting penalty.

\subsubsection{Effects of data size}
To assess the effect of data size, we performed the simulation for different values of $T$, while the hyperparameter $\lambda$ was chosen as $\lambda=1/T$.

\begin{figure}
  \centering
  \subfloat[a][Stability rate comparison between the covariance-parametrized LQR and the Bayesian LQR]{\includegraphics[width=0.75\linewidth]{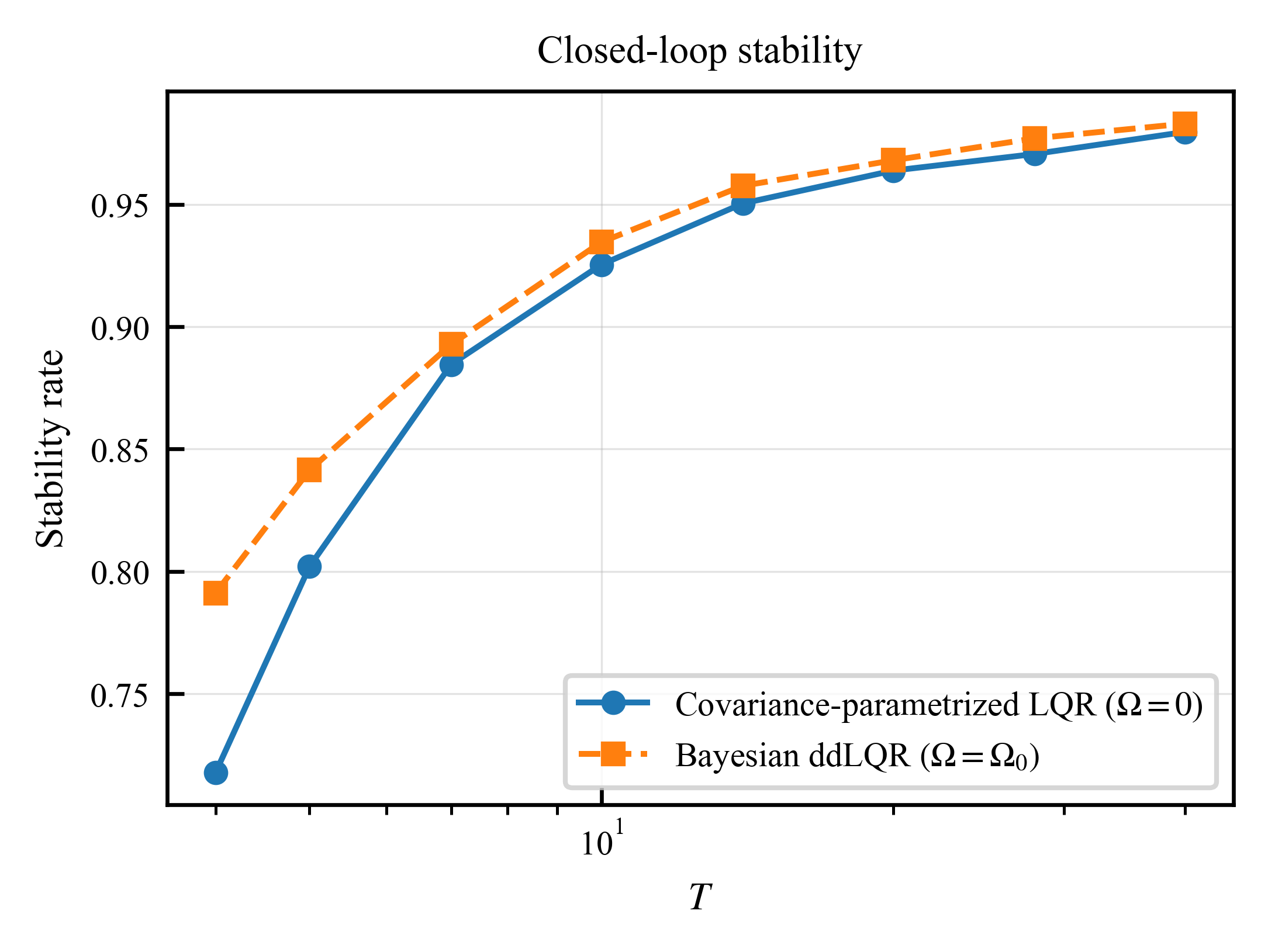} \label{fig:as}} \\
  \subfloat[b][Median optimality gap comparison between the covariance-parametrized LQR and the Bayesian LQR for data sets of length $T$]{\includegraphics[width=0.75\linewidth]{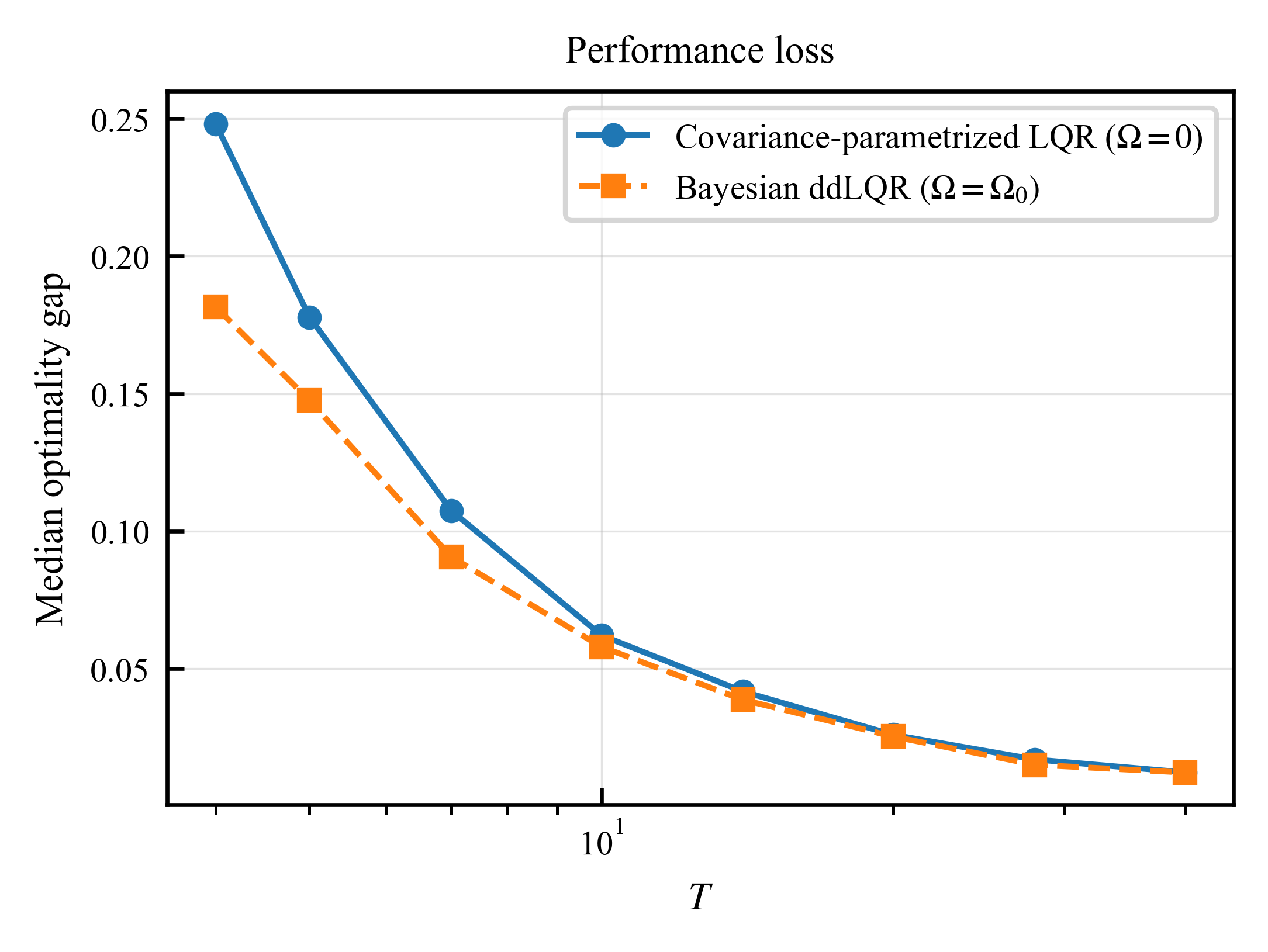} \label{fig:bd}}
  \caption{Effects of the data size for the covariance-parametrized LQR and the Bayesian ddLQR for data sets of length $T$.} \label{fig:discrete_spring_mass_results_2}
\end{figure}

If $T$ is small, the difference in stability and performance is significant in favor of the proposed approach. However, as $T$ increases and the posterior uncertainty shrinks, the two approaches converge in both metrics. These results indicate that the proposed Bayesian regularization is particularly beneficial in low-data regimes, where posterior uncertainty is largest.

\section{Conclusion}
This paper proposed a Bayesian formulation of direct and indirect ddLQR in which posterior uncertainty is propagated into the control design through a variance-based regularization term.
The resulting direct formulation admits an SDP formulation with dimensions independent of the data length, allowing it to be solved efficiently. Simulations indicate that the Bayesian ddLQR achieves a lower median optimality gap and higher closed-loop stability than existing approaches, particularly when limited data are available.
An interesting direction for future work is the extension to an adaptive or online setting as in \cite{zhao2025data}.
\addtolength{\textheight}{-3cm}   



\section*{APPENDIX}
\subsection{Derivation of the regularized least-squares problem}\label{app:reg_least_squares}
Our goal is to find the maximum a posteriori estimate of $(A,B)$ given $\mathcal{D}$. For ease of notation we define
\begin{equation}
     \Theta:=\begin{bmatrix}B & A\end{bmatrix} .
\end{equation}
\begin{equation}\label{eq:appendix_map_est}
    \begin{aligned}
        \Theta^* &= \arg \max_\Theta p(X_1| \Theta,D_0)p(\Theta|D_0)\\
        &= \arg \min_\Theta -\log\left(p(X_1| \Theta,D_0)\right) - \log\left(p(\Theta|D_0)\right)
    \end{aligned} 
\end{equation}
We know that the random variables $\Theta$, $X_1$, and $D_0$ satisfy the following relation
\begin{equation*}
    X_1 = \Theta D_0 + W_0,
\end{equation*}
where, according to \cite{gupta2018matrix} and \textit{Assumption \ref{assum_gaussian_mat}}
\begin{equation}\label{eq:appendix_x1_apost}
    p(X_1|\Theta,D_0) \propto \exp\left(\operatorname{Tr}\left(-\frac{1}{2\sigma_w^2}(X_1-\Theta D_0)(X_1-\Theta D_0)^\top\right)\right)
\end{equation}
and
\begin{equation}\label{eq:appendix_theta}
    p\left(\Theta | D_0\right) \propto \exp \left(\operatorname{Tr}\left(-\frac{1}{2}(\Theta - \bar{\Theta})\Omega(\Theta - \bar{\Theta})^\top\right)\right).
\end{equation}
Therefore, \eqref{eq:appendix_map_est} can be written using the Frobenius norm as
\begin{equation*}
    \Theta^* = 
    \arg \min_{\Theta }\frac{1}{\sigma_w^2}\|X_1 - \Theta D_0\|^2_F 
    +
    \left\|\left(\Theta -\bar{\Theta}\right)\Omega^{1/2}\right\|^2_F,
\end{equation*}
which coincides with \eqref{eq:regularized_least_squares}.
\subsection{Proof of \textit{Lemma 1}}\label{app:proof_lemma1}
For ease of notation we again define
\begin{equation}
    \begin{bmatrix}B & A\end{bmatrix} = \Theta.
\end{equation}
According to \eqref{eq:appendix_map_est}, \eqref{eq:appendix_x1_apost}, and \eqref{eq:appendix_theta} we can write the posterior of $\Theta$
\begin{equation*}
    p(\Theta|\mathcal{D}) \propto p(X_1|\Theta,D_0)p(\Theta).
\end{equation*}
Collecting quadratic and linear terms in $\Theta$ and completing the squares yields
\begin{equation*}
    p(\Theta | \mathcal{D}) \propto \exp\left(\operatorname{Tr}\left(-\frac{1}{2}\left(\Theta-\hat{\Theta}\right)\left(\sigma_w^2/T \Psi^{-1} \right)^{-1}\left(\Theta - \hat{\Theta}\right)^\top\right)\right),
\end{equation*}
where $\hat{\Theta}$ is the posterior mean.
Therefore, according to \cite{gupta2018matrix} the posterior distribution of $\Theta$ given $\mathcal{D}$ is 
\begin{equation*}
    p(\Theta | \mathcal{D}) = \mathcal{MN}\left(\hat{\Theta},I_n,\sigma_w^2/T \Psi^{-1}\right),
\end{equation*}
which concludes the proof.

\bibliographystyle{IEEEtran}
\bibliography{bibfile}

@article{mania2019certainty,
  title={Certainty equivalence is efficient for linear quadratic control},
  author={Mania, Horia and Tu, Stephen and Recht, Benjamin},
  journal={Advances in neural information processing systems},
  volume={32},
  year={2019}
}

@article{bartos2025stability,
  title={Stability of Certainty-Equivalent Adaptive LQR for Linear Systems with Unknown Time-Varying Parameters},
  author={Bartos, Marcell and K{\"o}hler, Johannes and D{\"o}rfler, Florian and Zeilinger, Melanie N},
  journal={arXiv preprint arXiv:2511.08236},
  year={2025}
}

@article{ivan21,
  title={Behavioral systems theory in data-driven analysis, signal processing, and control},
  author={Markovsky, Ivan and D{\"o}rfler, Florian},
  journal={Annual Reviews in Control},
  volume={52},
  pages={42--64},
  year={2021},
  publisher={Elsevier}
}

@book{pillonetto2022regularized,
  title={Regularized system identification-Learning dynamic models from data},
  author={Pillonetto, Gianluigi and Chen, Tianshi and Chiuso, Alessandro and De Nicolao, Giuseppe and Ljung, Lennart and others},
  year={2022},
  publisher={Springer}
}

@article{baggio2024bayesian,
  title={The bayesian separation principle for data-driven control},
  author={Baggio, Giacomo and Carli, Ruggero and Grimaldi, Riccardo Alessandro and Pillonetto, Gianluigi},
  journal={arXiv preprint arXiv:2409.16717},
  year={2024}
}

@inproceedings{coulson2019data,
  title={Data-enabled predictive control: In the shallows of the DeePC},
  author={Coulson, Jeremy and Lygeros, John and D{\"o}rfler, Florian},
  booktitle={2019 18th European control conference (ECC)},
  pages={307--312},
  year={2019},
  organization={IEEE}
}

@article{dorfler2023data,
  title={Data-driven control: Part two of two: Hot take: Why not go with models?},
  author={D{\"o}rfler, Florian},
  journal={IEEE Control Systems Magazine},
  volume={43},
  number={6},
  pages={27--31},
  year={2023},
  publisher={IEEE}
}

@article{tsiamis2023statistical,
  title={Statistical learning theory for control: A finite-sample perspective},
  author={Tsiamis, Anastasios and Ziemann, Ingvar and Matni, Nikolai and Pappas, George J},
  journal={IEEE Control Systems Magazine},
  volume={43},
  number={6},
  pages={67--97},
  year={2023},
  publisher={IEEE}
}

@article{zhao2025regularization,
  title={Regularization for covariance parameterization of direct data-driven LQR control},
  author={Zhao, Feiran and Chiuso, Alessandro and D{\"o}rfler, Florian},
  journal={IEEE Control Systems Letters},
  year={2025},
  publisher={IEEE}
}

@article{dorfler2023certainty,
  title={On the certainty-equivalence approach to direct data-driven LQR design},
  author={D{\"o}rfler, Florian and Tesi, Pietro and De Persis, Claudio},
  journal={IEEE Transactions on Automatic Control},
  volume={68},
  number={12},
  pages={7989--7996},
  year={2023},
  publisher={IEEE}
}

@article{dorfler2022bridging,
  title={Bridging direct and indirect data-driven control formulations via regularizations and relaxations},
  author={D{\"o}rfler, Florian and Coulson, Jeremy and Markovsky, Ivan},
  journal={IEEE Transactions on Automatic Control},
  volume={68},
  number={2},
  pages={883--897},
  year={2022},
  publisher={IEEE}
}

@book{anderson2007optimal,
  title={Optimal control: linear quadratic methods},
  author={Anderson, Brian DO and Moore, John B},
  year={2007},
  publisher={Courier Corporation}
}

@article{willems2005note,
  title={A note on persistency of excitation},
  author={Willems, Jan C and Rapisarda, Paolo and Markovsky, Ivan and De Moor, Bart LM},
  journal={Systems \& Control Letters},
  volume={54},
  number={4},
  pages={325--329},
  year={2005},
  publisher={Elsevier}
}

@article{goulart2024clarabel,
  title={Clarabel: An interior-point solver for conic programs with quadratic objectives},
  author={Goulart, Paul J and Chen, Yuwen},
  journal={arXiv preprint arXiv:2405.12762},
  year={2024}
}

@book{gupta2018matrix,
  title={Matrix variate distributions},
  author={Gupta, Arjun K and Nagar, Daya K},
  year={2018},
  publisher={Chapman and Hall/CRC}
}

@article{chiuso2025harnessing,
  title={Harnessing uncertainty for a separation principle in direct data-driven predictive control},
  author={Chiuso, Alessandro and Fabris, Marco and Breschi, Valentina and Formentin, Simone},
  journal={Automatica},
  volume={173},
  pages={112070},
  year={2025},
  publisher={Elsevier}
}

@article{zhao2025data,
  title={Data-enabled policy optimization for direct adaptive learning of the LQR},
  author={Zhao, Feiran and D{\"o}rfler, Florian and Chiuso, Alessandro and You, Keyou},
  journal={IEEE Transactions on Automatic Control},
  year={2025},
  publisher={IEEE}
}

@ARTICLE{8933093,
  author={De Persis, Claudio and Tesi, Pietro},
  journal={IEEE Transactions on Automatic Control}, 
  title={Formulas for Data-Driven Control: Stabilization, Optimality, and Robustness}, 
  year={2020},
  volume={65},
  number={3},
  pages={909-924},
  doi={10.1109/TAC.2019.2959924}}

\end{document}